\newcommand{\pmn}{\par\medskip\noindent}
\newcommand{\pbn}{\par\bigskip\noindent}
\newtheorem{theorem}{Theorem}
\theoremstyle{definition}
\DeclareMathAlphabet{\mathchorus}{OT1}{cmtt}{m}{sl}  % for fps 
\newcommand{\fps}[1]{\mathchorus{#1}} % font for fps
\newcommand{\ix}{\textrm{\textsl{x}}}
\newcommand{\sminus}{\scalebox{0.45}[1.0]{\( - \)}}
\begin{document}
\title{ 
On the Poincar\'{e} functional equation }
\author{Sergei Kazenas }
\begin{abstract}
In this paper, a formula for the solution of the Poincar\'{e} functional equation in algebra of formal power series and its application to continuous iteration  are presented.
\end{abstract}

\email{kazenas@protonmail.com, sergei.kazenas@gmail.com} \maketitle

The Poincar\'{e} functional equation and associated Schr\"{o}der's equation   arise naturally in analytic iteration theory and have been studied by a great many authors (see, for instance, \cite{Der}). In the following, Poincar\'{e}'s equation is treated algebraically; and the formal solution representation is derived using finite symbolic manipulations with homogeneous quantum difference operators.

Here the formal power series treatment is near to that used by Niven \cite{Niv}, but the notation is slightly different.

Let $\mathcal{F}$ denote the algebra of formal power series over field $\mathbb{C}$ of complex numbers. The algebra $\mathcal{P}$ of formal polynomials is a subalgebra of $\mathcal{F}$. Any $\fps{f} \in \mathcal{F} $ is determined  by the vector of its coefficients. Denote by $(\fps{f}_{.j})_{j\geqslant 0}$ or by $ (\fps{f}_{.0}, \fps{f}_{.1}, ...) $ that vector . Also denote by $\ix$ the monomial determined by $\ix_{.j}=\delta_{1,j}$. 

Recall that for any $ \{\fps{f},\fps{g}\} \subset \mathcal{F}$, two basic operations, addition and multiplication, are defined by $(\fps{f}+\fps{g})_{.j}=\fps{f}_{.j}+\fps{g}_{.j}$ and $(\fps{f} \fps{g})_{.j}=\sum_{0\leqslant l \leqslant j}\fps{f}_{.l}\fps{g}_{.j-l}$, accordingly. If $\fps{f}$ is not polynomial, then the composition is generally defined by $(\fps{f} \circ \fps{g})_{.j} =\sum_{0\leqslant l \leqslant j} \fps{f}_{.l}(\fps{g}^l)_{.j}$ only when $\fps{g}_{.0}=0$. If $\fps{f}$ is polynomial, there are no restrictions on $\fps{g}$.

In addition, it is convenient to implement one outer operation; for any element $a$ of some  algebra $\mathcal{A}$, which is  isomorphic  to $\mathcal{F}$ or $\mathcal{P}$, define $\fps{f}[a]:=  \sum_{j \geqslant 0} \fps{f}_{.j} a^j$ .  For example, $\fps{f}[\ix \fps{g}]\equiv \fps{f}\circ (\ix \fps{g})$, when $\{\fps{f},\fps{g}\} \subset  \mathcal{F}$. If $a$ is number, then $\fps{f}[a]$ may only be a number. The  composition operator $\widehat{q\ix}$ defined by $\widehat{q\ix}\fps{f} = \fps{f} \circ (q\ix)$, where $q \in \mathbb{C} $,   will also be needed later on. Clearly, the expression $\fps{p}[\widehat{q\ix}]$ represents a linear operator, when $\fps{p}$ is polynomial.

Recall that $\fps{f}$ is invertible if and only if $\fps{f}_{.0}=0$ and $\fps{f}_{.1}\neq 0$, or $\fps{f}_{.0} \neq 0$ and $\fps{f}=\fps{f}_{.0}+\fps{f}_{.1} \ix$. Let $\fps{f} \in \ix \mathcal{F}$ be invertible and $q \neq 0$ not be a root of unity. Let $\fps{f}$ also satisfy Poincar\'{e}'s equation
$$ 
\fps{f} \circ (q\fps{x}) = \fps{p} \circ \fps{f} \, \text{.}
\eqno{(1)}
$$
Then it immediately follows that $\fps{f}^{\circ\sminus 1}$ satisfies Schr\"{o}der's equation
$q \fps{f}^{\circ\sminus 1} = \fps{f}^{\circ\sminus 1} \circ \fps{p} $.

It also follows that an invertible solution exists if and only if $\fps{p}_{.0} = 0$ and $\fps{p}_{.1} = q$. Therefore, equation (1) can be rewritten in the equivalent form: $\fps{f} \circ (q^{\sminus 1} \ix) = \fps{p}^{\circ \sminus 1} \circ \fps{f}$. Without loss of generality, the nonzero coefficient $\fps{f}_{.1}$ can be fixed by $\fps{f}_{.1}=1$ and the other coefficients $\fps{f}_{.2}, \fps{f}_{.3}, ... $ can be calculated recursively from
$$
\fps{f}_{.j} = \frac{1}{q^j-1}\sum_{l=1}^{j-1}\fps{p}_{.l} (\fps{f}^l)_{.j} \, \text{.} \eqno{(2)}
$$

However, there are  approaches to represent the coefficients of $\fps{f}$ nonrecursively. One such approach is described below.

For an arbitrary nonzero polynomial $\fps{g}$ define a homogeneous quantum difference operator in $\mathcal{F}$ by

$$D_{\fps{g};q} := (\ix^{\sminus n}) \fps{g}[\widehat{q\ix}]  \, \text{,}$$
where operator $(\ix^{\sminus n})$ is naturally defined in $\ix^n {\mathcal{F}}$, and $n$ is the smallest nonnegative integer such that $\fps{g}[q^n] \neq 0$; let call that number the order of operator $D_{\fps{g};q}$ or $q$-difference order of $\fps{g}$. 
For example, $D_{\frac{1-\ix}{1-q};q}$ is the first order operator, which can be viewed as Euler-Jackson difference operator in ${\mathcal{F}}$. 

From $D_{\fps{g};q}\ix^m = (\ix^{\sminus n}) \fps{g}[\widehat{q\ix}] \ix^m = (\ix^{\sminus n}) \fps{g}[q^m]\ix^m = 0$ (0 $\leqslant m<n$) Macloren-like expansion comes:

$$ \fps{f}_{.j} = \frac{(D_{\fps{g}_j;q}\fps{f})[0]}{\fps{g}_j[q^j]}  \text{ ,} \eqno{(3)}
$$

where $D_{\fps{g}_j;q}$ is of order $j$.

Now the preparation is complete to derive a nonrecursive companion to (2).

\begin{theorem}
Let $\fps{f}$ satisfy Poincar\'{e}'s equation {\normalfont{(1).}}  $\fps{f}_{.0}=0$, $\fps{f}_{.1}=1$; $\fps{g}_j$ stands for an arbitrary nonzero polynomial of $j$-th $q$-difference order, then the coefficients of $\fps{f}$ can be calculated by
$$
\fps{f}_{.j} = \frac{1}{\fps{g}_j[q^j]}\sum_{i\geqslant 0} \fps{g}_{j.i}(\fps{p}^{\circ i})_{.j} \, \text{.} \eqno{(4)}
$$
\end{theorem}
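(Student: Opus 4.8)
The plan is to compute the quantity $(D_{\fps{g}_j;q}\fps{f})[0]$ appearing in the Maclaurin-like expansion (3); it then suffices to show $(D_{\fps{g}_j;q}\fps{f})[0]=\sum_{i\geqslant 0}\fps{g}_{j.i}(\fps{p}^{\circ i})_{.j}$, after which dividing by $\fps{g}_j[q^j]$ in (3) gives (4). I begin by unwinding $D_{\fps{g}_j;q}=(\ix^{\sminus j})\fps{g}_j[\widehat{q\ix}]$: since $\widehat{q\ix}$ acts by $(\,\cdot\,)\circ(q\ix)$ and $\fps{g}_j$ is a polynomial, $\fps{g}_j[\widehat{q\ix}]\fps{f}=\sum_{i\geqslant 0}\fps{g}_{j.i}\,\fps{f}\circ(q^i\ix)$ is a finite sum. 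A straightforward induction on $i$ using (1), associativity of composition, and $q^{i+1}\ix=(q^i\ix)\circ(q\ix)$ gives $\fps{f}\circ(q^i\ix)=\fps{p}^{\circ i}\circ\fps{f}$; hence, by $\mathbb{C}$-linearity of composition in its first argument, $\fps{g}_j[\widehat{q\ix}]\fps{f}=\fps{P}_j\circ\fps{f}$ where $\fps{P}_j:=\sum_{i\geqslant 0}\fps{g}_{j.i}\,\fps{p}^{\circ i}$, and note $(\fps{P}_j)_{.j}=\sum_{i\geqslant 0}\fps{g}_{j.i}(\fps{p}^{\circ i})_{.j}$ is exactly the quantity to be reached.

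The heart of the matter is to prove that $\ix^j$ divides $\fps{P}_j$, i.e. $\fps{P}_j\in\ix^j\mathcal{F}$. From the identity $\fps{g}_j[\widehat{q\ix}]\ix^m=\fps{g}_j[q^m]\ix^m$ recalled before the theorem, together with the definition of the $j$-th $q$-difference order ($\fps{g}_j[q^m]=0$ for $0\leqslant m<j$), linearity of $\fps{g}_j[\widehat{q\ix}]$ yields $\fps{g}_j[\widehat{q\ix}]\fps{f}=\sum_{m\geqslant j}\fps{f}_{.m}\,\fps{g}_j[q^m]\,\ix^m\in\ix^j\mathcal{F}$ --- which is exactly why $D_{\fps{g}_j;q}\fps{f}$ is well defined. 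Since $\fps{f}$ is invertible, $\fps{f}^{\circ\sminus 1}$ exists with $(\fps{f}^{\circ\sminus 1})_{.0}=0$, $(\fps{f}^{\circ\sminus 1})_{.1}=1$; post-composing $\fps{g}_j[\widehat{q\ix}]\fps{f}=\fps{P}_j\circ\fps{f}$ with $\fps{f}^{\circ\sminus 1}$ gives $\fps{P}_j=(\fps{g}_j[\widehat{q\ix}]\fps{f})\circ\fps{f}^{\circ\sminus 1}$. Writing $\fps{g}_j[\widehat{q\ix}]\fps{f}=\ix^j\fps{u}$ with $\fps{u}\in\mathcal{F}$ and using that composition in the first argument is a ring homomorphism, $\fps{P}_j=(\fps{f}^{\circ\sminus 1})^j\,(\fps{u}\circ\fps{f}^{\circ\sminus 1})\in\ix^j\mathcal{F}$ because $(\fps{f}^{\circ\sminus 1})^j\in\ix^j\mathcal{F}$.

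It remains to read off the coefficient. Writing $\fps{P}_j=\sum_{l\geqslant j}(\fps{P}_j)_{.l}\ix^l$ and using $(\fps{f}^l)_{.j}=\delta_{l,j}$ for $l\geqslant j$ (since $\fps{f}_{.0}=0$, $\fps{f}_{.1}=1$), we get $(\fps{P}_j\circ\fps{f})_{.j}=(\fps{P}_j)_{.j}$, so $(D_{\fps{g}_j;q}\fps{f})[0]=(\fps{g}_j[\widehat{q\ix}]\fps{f})_{.j}=(\fps{P}_j\circ\fps{f})_{.j}=(\fps{P}_j)_{.j}=\sum_{i\geqslant 0}\fps{g}_{j.i}(\fps{p}^{\circ i})_{.j}$, which combined with (3) is (4). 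I expect the main obstacle to be the divisibility $\fps{P}_j\in\ix^j\mathcal{F}$: the individual series $\fps{f}\circ(q^i\ix)=\fps{p}^{\circ i}\circ\fps{f}$ are \emph{not} divisible by $\ix^j$, and only the $\fps{g}_j$-weighted combination --- via the order hypothesis on $\fps{g}_j$ and the invertibility of $\fps{f}$ --- produces the cancellation. Alternatively one can avoid $D_{\fps{g}_j;q}$ altogether: the identity $\fps{f}\circ(q^i\ix)=\fps{p}^{\circ i}\circ\fps{f}$ also reads $\fps{p}^{\circ i}=\fps{f}\circ(q^i\fps{f}^{\circ\sminus 1})$, so $(\fps{p}^{\circ i})_{.j}=\sum_{m}\fps{f}_{.m}\,q^{im}\,((\fps{f}^{\circ\sminus 1})^m)_{.j}$; summing against $\fps{g}_{j.i}$ and using $((\fps{f}^{\circ\sminus 1})^m)_{.j}=0$ for $m>j$ and $\fps{g}_j[q^m]=0$ for $m<j$ leaves only the term $m=j$, giving (4) directly. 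I would present the first route, since it exhibits (4) as the announced nonrecursive companion of (2).
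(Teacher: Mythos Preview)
Your proof is correct and follows essentially the same route as the paper: both iterate (1) to $\fps{f}\circ(q^i\ix)=\fps{p}^{\circ i}\circ\fps{f}$ and then compute $(D_{\fps{g}_j;q}\fps{f})[0]$ by passing from $\fps{p}^{\circ i}\circ\fps{f}$ to $\fps{p}^{\circ i}$ and invoking (3). The paper justifies that passage tersely via the observation $\tfrac{\fps{f}-\ix}{\ix}[0]=0$, whereas you make the divisibility $\fps{P}_j\in\ix^j\mathcal{F}$ explicit by post-composing with $\fps{f}^{\circ\sminus 1}$; the substance is the same.
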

\begin{proof}
Calculate $q$-differences: 
$$
D_{\fps{g}_j;q}\fps{f} = (\ix^{\sminus j})\fps{g}_j[\widehat{q\ix}]\fps{f}=
\ix^{\sminus j} \sum_{i\geqslant 0}\fps{g}_{j.i}\fps{f}[q^i\ix]=
\ix^{\sminus j} \sum_{i\geqslant 0}\fps{g}_{j.i}\fps{p}^{\circ j}[\fps{f}] \, \text{.}
$$
Note that $\frac{\fps{f} - \ix}{\ix}[0]=0$ and continue:
$$
\begin{aligned}
(D_{\fps{g}_j;q}\fps{f})[0] & =
\left( \ix^{\sminus j} \sum_{i\geqslant 0}\fps{g}_{j.i}\fps{p}^{\circ j} \left[  \ix \left( 1+\frac{\fps{f}-\ix}{\ix} \right) \right] \right) [0] \\
& =\left( \ix^{\sminus j} \sum_{i\geqslant 0}\fps{g}_{j.i}\fps{p}^{\circ j}[\ix] \right) [0] 
=\sum_{i\geqslant 0}\fps{g}_{j.i}(\fps{p}^{\circ i})_{.j} \, \text{.}
\end{aligned}
$$

Taking in account (3) completes the proof.
\end{proof}

\pmn \emph{Remark.} Suppose $\fps{p}$ is polynomial and the function $f$ defined by $f(x) = \sum_{j \geqslant 0} \fps{f}_{.j} x^j$ is entire. Then $f$ is determined by its values at points $a q^0, a q^1, ...$, where $a$ is such that $f(a)$ is not periodic (or eventually periodic) point of the map $x \mapsto \fps{p}[x]$ . This is clear from the fact that sums in (4) are finite and $\fps{p}^{\circ i}$ are determined by its values at points $\fps{f}[a q^0], \fps{f}[a q^1], ...$
\pmn

As a corollary, one can derive a formula for continuous iteration.

Fix $j$. Using Kac and Cheung's notation, $(x - a)^j_q:=(x-a)(x-q a)...(x-q^{j-1}a)$, and taking $\ix^{n-j} (\ix - 1)^j_q$ or $\ix^{n-1-j} (\ix - 1)^j_q$ ($n > j$) in place of $\fps{g}_j$ in (4) gives equality:

$$
\begin{aligned}
& q^{ \sminus (n-j)j}\sum_{i=n-j}^n (\ix^{n-j} (\ix - 1)^j_q)_{.i} (\fps{p}^{\circ i})_{.j} 
\\
& = q^{ \sminus (n-1-j)j}\sum_{i=n-1-j}^{n-1} (\ix^{n-1-j} (\ix - 1)^j_q)_{.i} (\fps{p}^{\circ i})_{.j}
\end{aligned}
$$
Using Gauss' binomial, this yelds recursive formula:
$$
\begin{aligned}
(\fps{p}^{\circ n})_{.j} =& \sum_{i=1}^{j+1}  \binom{j+1}{i}_q
(\sminus 1)^{i+1}q^{i(i-1)/2} (\fps{p}^{\circ (n-i)})_{.j}  \\
& 
= (S^0 - (S^0-S)^j_q)(\fps{p}^{\circ n})_{.j}
\end{aligned}
$$
where $S$ is shift operator with respect to $n$, i.e. $S(\fps{p}^{\circ n})_{.j} = (\fps{p}^{\circ (n-1)})_{.j} $. 

It follows from this representation that $(\fps{p}^{\circ n})_{.j}$ is a polynomial of degree $j$ in $q^n$ (recall that $q = \fps{p}_{.1}$) and Waring's interpolation process gives:
$$
(\fps{p}^{\circ n})_{.j}=\sum_{l=0}^j\frac{(q^n-1)^l_q (q^n-q^{l+1})^{j-l}_q}{(q^l-1)^l_q (q^l-q^{l+1})^{j-l}_q}(\fps{p}^{\circ l})_{.j} \eqno{(5)}
$$
Expanding this polynomial gives:
$$
(\fps{p}^{\circ n})_{.j}=\sum_{i=1}^jq^{n i} \sum_{k=0}^i ( \sminus 1)^{j-k} q^{(j-k)(j-k+1)/2}\binom{j+1}{k}_q \rho_{l,j,i} 
\text{ ,}
$$
where
$$
\rho_{l,j,i}:=
\sum_{l=0}^j\frac{q^{k l - i l -l}(\fps{p}^{\circ l})_{.j}}{(q^l-1)^l_q (q^l-q^{l+1})^{j-l}_q}
\text{ .}
$$
The behavior of $\rho_{l,j,i}$ may be the subject of further research.

\pmn

Jacobi [6] gave an explicit general formula for $(\fps{p}^{\circ n})_{.j}$ (for positive integer $n$). Therefore, formula (5) allows calculating fractional functional powers directly. And there are no restrictions on $q$. For example,
$$
(q \ix + q {\ix}^2 )^{\circ \frac{1}{2}} = 
q^{1/2} \ix +\frac{q^{1/2}}{1+q^{1/2}} \ix ^2
- \frac{2q}{(1+q^{1/2})^2 (1+q)} \ix^3 + ...
$$

Of course, analytic function $x \mapsto q x +q x^2$ ($q \neq 1$) has two functional square roots. The second root can be obtained by replacing "$q^n$" with "$ \sminus q^{1/2}$" in the expression (5). (In case $q = 1$, it has only one functional square root. This case was developed in \cite{Lab}, for instance.)

Note that changing $\fps{g}_j$ in the expansion (4) does not lead to new properties of  iterated formal power series $\fps{p}^{\circ n}$. Therefore, by setting $\fps{g}_j = (\ix - 1)^j_q$ and using Gauss' binomial, it may be simplified:

$$
\fps{f}_{.j}=\frac{1}{(q-1)^j q^{(j-1)j/2}j!_q}\sum_{i=0}^j q^{(j-i)(j-i-1)/2}\binom{j}{i}_q ( \sminus 1)^{j-i} (\fps{p}^{\circ i})_{.j}
\text{ ,}
$$

where
$$
j!_q := \frac{(1-q)_q^j}{(1-q)^j}
\text{ .}
$$

\pbn

\vspace{5mm}

\vspace{5mm}
\end{document}